\newtheorem{lemma}{Lemma}[section]
\newtheorem{theorem}{Theorem}[section]
\newtheorem{corollary}{Corollary}[section]
\newtheorem{example}{Example}[section]
\title{De Bruijn Tori Without Zeros: A Field-Theoretic Perspective}
\author{Ming Hsuan Kang, Yu Hsuan Hsieh}
\date{}
\begin{document}
\def \tr{\mathrm{tr}}

\maketitle

\begin{abstract}
We present an algebraic construction of trace-based De Bruijn tori over finite fields, focusing on the nonzero variant that omits the all-zero pattern. The construction arranges nonzero field elements on a toroidal grid using two multiplicatively independent generators, with values obtained by applying a fixed linear map, typically the field trace.

We characterize sampling patterns as subsets whose associated field elements form an \( \mathbb{F}_p \)-basis, and show that column structures correspond to cyclic shifts of De Bruijn sequences determined by irreducible polynomials over subfields. Recursive update rules based on multiplicative translations enable efficient computation.
\end{abstract}

\section{Introduction}

De Bruijn sequences are classical combinatorial objects that compactly encode all possible strings of a fixed length over a finite alphabet within a single cyclic sequence. Their concise structure and uniform coverage make them foundational in combinatorics and widely useful in applications ranging from coding theory and signal design to robotics, graphics, and pseudo-random number generation \cite{debruijn1946circuits,fredricksen1982survey,alhakim2014survey}.

In this work, we explore a two-dimensional generalization known as the \emph{De Bruijn torus}, where symbols are arranged on a toroidal grid such that prescribed local patterns (e.g., \( m \times n \) subarrays) appear exactly once. This concept arises naturally in areas such as robot localization \cite{caceres2014designing}, visual encoding \cite{galbiati2020debruijn}, and efficient sampling schemes. We present an algebraic construction of such tori using the structure of finite fields, in which each grid position is associated with a distinct multiplicative field element, and values are determined via a fixed linear map—typically the field trace—to project into the base field.

We focus on a \emph{nonzero variant} of the De Bruijn torus, which omits the all-zero pattern by restricting attention to the multiplicative group \( \mathbb{F}_{p^n}^\times \). While the all-zero vector is excluded, this omission does not hinder any practical purpose; in applications such as sampling, coding, and system identification, it is the uniform coverage of nontrivial patterns that plays the critical role \cite{etzion2008optimality}.

A key concept in our framework is that of a \emph{sampling pattern}—a fixed subgrid of the torus whose toroidal translations span all nonzero vectors in \( \mathbb{F}_p^n \) exactly once. We show that this occurs precisely when the pattern corresponds to a basis of the ambient field over \( \mathbb{F}_p \). We further introduce multiple constructions of such patterns, including those based on Kronecker product structures and recursive extension via linearly independent tiles.

Additionally, we study the recursive structure of the torus, showing how new values in the grid can be computed from previous ones using linear feedback relations derived from the field's multiplicative structure. This generalizes the recurrence relation of one-dimensional De Bruijn sequences and facilitates efficient computation and indexing.

Concrete examples over \( \mathbb{F}_{16} \) are provided to illustrate how the theoretical framework plays out in practice, including the relationship between trace values and irreducibility conditions over subfields. These examples also clarify the interplay between algebraic identities and the visible structure of the torus.

Altogether, our results provide a unified algebraic framework for constructing and analyzing De Bruijn tori, bridging ideas from field theory, combinatorics, and pseudorandomness. Despite omitting the all-zero block, this approach remains robust and broadly applicable in both theoretical and applied contexts.

\section{ Preliminaries}

\subsection{Algebraic Construction of De Bruijn Sequences}

A classical \emph{De Bruijn sequence} of order \( n \) over an alphabet of size \( p \) is a cyclic sequence of length \( p^n \) in which every possible \( n \)-tuple appears exactly once as a consecutive substring. In this subsection, we focus on the \emph{nonzero variant}, which omits the all-zero block and has length \( p^n - 1 \).

When the alphabet is identified with the finite field \( \mathbb{F}_p \), one can exploit the multiplicative and linear structure of \( \mathbb{F}_{p^n} \) to construct such sequences.

Let \( \mathbb{F}_{p^n} \) be a degree-\( n \) extension of \( \mathbb{F}_p \), and fix a primitive element \( \alpha \in \mathbb{F}_{p^n} \), so that
\[
\mathbb{F}_{p^n}^\times = \{ \alpha^0, \alpha^1, \dots, \alpha^{p^n - 2} \}.
\]
Let \( \psi: \mathbb{F}_{p^n} \to \mathbb{F}_p \) be any nonzero \( \mathbb{F}_p \)-linear map. We define the associated \emph{nonzero De Bruijn sequence} as
\[
DB^\psi_\alpha := \left( \psi(\alpha^0),\ \psi(\alpha^1),\ \dots,\ \psi(\alpha^{p^n - 2}) \right).
\]
Sliding windows of length \( n \) through \( DB^\psi_\alpha \) yield all nonzero \( n \)-tuples in \( \mathbb{F}_p^n \) exactly once.

This construction is justified by the coordinate map
\[
\Psi: \mathbb{F}_{p^n} \to \mathbb{F}_p^n, \quad
\Psi(x) = \left( \psi(x),\ \psi(\alpha x),\ \dots,\ \psi(\alpha^{n-1} x) \right),
\]
which is an \( \mathbb{F}_p \)-linear isomorphism when \( \psi \) is nonzero. In particular, the restriction of \( \Psi \) to \( \mathbb{F}_{p^n}^\times \) maps onto the set of nonzero vectors in \( \mathbb{F}_p^n \). Therefore, as \( x \) ranges over \( \mathbb{F}_{p^n}^\times \), the vectors \( \Psi(x) \) enumerate all nonzero \( n \)-tuples in \( \mathbb{F}_p^n \) exactly once.

Moreover, since \( \alpha \) satisfies its minimal polynomial \( f(x) = x^n + c_{n-1} x^{n-1} + \dots + c_0 \), the sequence \( s_i := \psi(\alpha^i) \) satisfies the linear recurrence relation
\[
s_{i+n} = -c_0 s_i - c_1 s_{i+1} - \dots - c_{n-1} s_{i+n-1}.
\]
This allows the entire sequence to be generated efficiently from any initial segment of length \( n \), together with the \emph{feedback vector} \( (-c_0, -c_1, \dots, -c_{n-1}) \), which encodes the recurrence coefficients. This recurrence structure will reappear in our two-dimensional generalization.

Notably, while the construction depends on the choice of the linear map \( \psi \), the resulting sequence \( DB^\psi_\alpha \) is uniquely determined up to a cyclic shift. This is because the recurrence is governed solely by the minimal polynomial of \( \alpha \), and is thus independent of \( \psi \). In fact, for any nonzero linear map \( \psi \), there exists some integer \( r \) such that
\[
DB^\psi_\alpha = DB_\alpha^{[r]},
\]
where \( DB_\alpha := DB^{\tr}_\alpha \) is the standard De Bruijn sequence constructed using the trace map \( \tr = \tr_{\mathbb{F}_{p^n}/\mathbb{F}_p} \), and \( DB_\alpha^{[r]} \) denotes its \( r \)-shifted version.

There are exactly \( p^n - 1 \) such nonzero maps \( \psi \), each yielding a distinct cyclic shift of \( DB_\alpha \). This perspective becomes especially useful when studying De Bruijn tori, where each column corresponds to a translated sampling pattern and thus to a different shift \( DB_\alpha^{[r]} \).

\subsection{From Sequences to Strips}

When the base field is a proper extension \( \mathbb{F}_{p^m} \cong \mathbb{F}_p^m \), each symbol in a one-dimensional De Bruijn sequence over \( \mathbb{F}_p \) can be expanded into a column vector of length \( m \) over \( \mathbb{F}_p \). Arranging these columns horizontally produces a two-dimensional array—known as a \emph{De Bruijn strip}—in which every possible \( m \times n \) binary submatrix (excluding the all-zero block, in the nonzero case) appears exactly once as a consecutive submatrix.

This reinterpretation highlights the combinatorial completeness of De Bruijn sequences in higher dimensions and anticipates the structure of \emph{De Bruijn tori}, which generalize this idea by using multiplicative grid placements and sampling via linear projections.

\begin{example}
Let \(\beta\) be a root of \(x^2 + x + 1\) in \(\mathbb{F}_2[x]\), so that \(\mathbb{F}_4 = \mathbb{F}_2(\beta)\). Let \(\alpha\) be a root of \(x^2 + x + \beta\) in \(\mathbb{F}_{4^2}\). Then \(\alpha\) generates \( \mathbb{F}_{16}^\times \). The feedback vector corresponding to the minimal polynomial of \( \alpha \) over \( \mathbb{F}_4 \) is \( (\beta, 1) \), and the corresponding De Bruijn sequence is:
\[
\begin{array}{|c|c|c|c|c|c|c|c|c|c|c|c|c|c|c|c|}
\hline
0 & 0 & 1 & 1 & \beta+1 & 1 & 0 & \beta & \beta & 1 & \beta & 0 & \beta+1 & \beta+1 & \beta & \beta+1 \\ 
\hline
\end{array}
\]

Rewriting each symbol as a binary column vector (over \( \mathbb{F}_2 \)) produces the following \emph{De Bruijn strip}:
\[
\begin{array}{|c|c|c|c|c|c|c|c|c|c|c|c|c|c|c|c|}
\hline
0 & 0 & 1 & 1 & 1 & 1 & 0 & 0 & 0 & 1 & 0 & 0 & 1 & 1 & 0 & 1 \\
\hline
0 & 0 & 0 & 0 & 1 & 0 & 0 & 1 & 1 & 0 & 1 & 0 & 1 & 1 & 1 & 1 \\
\hline
\end{array}
\]
Here, every \( 2 \times 2 \) binary submatrix appears exactly once, satisfying the De Bruijn property in two dimensions.
\end{example}

\section{The Algebraic De Bruijn Torus}

\subsection{Construction from Field Geometry}\label{construction}
In this section, we introduce a construction of a De Bruijn torus based on a finite field.
For a prime number $p$, the multiplicative group $\mathbb{F}_{p^n}^{\times}$ of the finite field $\mathbb{F}_{p^n}$, where $p^n-1$ can be expressed as a product of two relatively prime positive integers $s$ and $t$, leads to an $s\times t$ De Bruijn torus.

Let \( p \) be a prime number, and let \( \mathbb{F}_{p^n} \) be a finite field such that \( p^n - 1 = s \times t \), where \( s < t \) and \( \gcd(s, t) = 1 \). Let \( \alpha \in \mathbb{F}_{p^n} \) be a root of a primitive polynomial of degree \( n \), so that \( \alpha \) generates the multiplicative group \( \mathbb{F}_{p^n}^\times \).

We construct an \( s \times t \) toroidal array \( A = (A_{i,j}) \) with entries in \( \mathbb{F}_{p^n}^\times \), defined by
\[
A_{i,j} := \beta^i \gamma^j, \quad \text{where } \beta = \alpha^t, \ \gamma = \alpha^s.
\]
Since \( \gcd(s, t) = 1 \), the map \( (i,j) \mapsto \beta^i \gamma^j \) defines a bijection between the grid positions and the nonzero elements of \( \mathbb{F}_{p^n} \).

Now let \( \psi: \mathbb{F}_{p^n} \to \mathbb{F}_p \) be a nonzero \( \mathbb{F}_p \)-linear map. Applying \( \psi \) entrywise to the array \( A \), we obtain a new array \( B = (B_{i,j}) \) with entries in \( \mathbb{F}_p \), defined by
\[
B_{i,j} := \psi(\beta^i \gamma^j).
\]

We refer to \( B \) as a \textbf{De Bruijn torus} over \( \mathbb{F}_p \).

\subsection{Sampling Patterns and Their Characterization  }

To study the combinatorial behavior of the De Bruijn torus \( B = (B_{i,j}) \), we focus on identifying special subsets of positions on the torus—called \emph{patterns}—that extract useful algebraic information. In particular, we aim to find patterns that uniquely encode all nonzero elements of \( \mathbb{F}_{p^n} \).

\paragraph{Translation and Value Patterns.}
Define the toroidal translation operator
\[
T_{a,b} : \mathbb{Z}_s \times \mathbb{Z}_t \to \mathbb{Z}_s \times \mathbb{Z}_t, \quad
T_{a,b}(i,j) := \left( (i+a) \bmod s,\ (j+b) \bmod t \right).
\]
Given a pattern \( S \subset \mathbb{Z}_s \times \mathbb{Z}_t \), its \emph{value pattern} under shift \( (a,b) \) is defined as
\[
B|_{T_{a,b}(S)} := \left\{ B_{(i+a) \bmod s,\ (j+b) \bmod t} \mid (i,j) \in S \right\} \subset \mathbb{F}_p^n.
\]

\paragraph{Terminology.}
Let \( A|_S := \{ \beta^i \gamma^j \mid (i,j) \in S \} \subset \mathbb{F}_{p^n} \) be the set of field elements associated to the pattern \( S \) on the array \( A \) (prior to applying \( \psi \)).

We define:
\begin{itemize}
  \item \( S \) is a \emph{linearly independent pattern} if the elements of \( A|_S \) are linearly independent over \( \mathbb{F}_p \).
  \item \( S \) is a \emph{basis pattern} if \( A|_S \) forms an \(\mathbb{F}_p\)-basis of \( \mathbb{F}_{p^n} \). This implies \( |S| = n \).
  \item \( S \) is a \emph{sampling pattern} if the value patterns \( B|_{T_{a,b}(S)} \) over all torus translations are all distinct and cover every nonzero vector in \( \mathbb{F}_p^n \) exactly once.
\end{itemize}

That is, \( S \subset \mathbb{Z}_s \times \mathbb{Z}_t \) is a sampling pattern if
\[
\left\{ B|_{T_{a,b}(S)} \mid (a,b) \in \mathbb{Z}_s \times \mathbb{Z}_t \right\}
\]
contains exactly \( p^n - 1 \) distinct nonzero vectors in \( \mathbb{F}_p^n \).

Such a pattern certifies that the torus encodes a complete set of distinct nonzero length-\( n \) vectors over \( \mathbb{F}_p \), thus mimicking the behavior of a classical De Bruijn sequence in a two-dimensional setting.

\begin{theorem}
Let \( B = (B_{i,j}) \) be the De Bruijn torus defined by \( B_{i,j} = \psi(\beta^i \gamma^j) \), where \( \psi: \mathbb{F}_{p^n} \to \mathbb{F}_p \) is a fixed nonzero \(\mathbb{F}_p\)-linear map. Then a pattern \( S\) of size $n$ is a sampling pattern if and only if  it is a basis pattern.
\end{theorem}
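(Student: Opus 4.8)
The plan is to prove both directions at once by translating the sampling condition into a statement about a single $\mathbb{F}_p$-linear map. Writing $S = \{(i_1,j_1),\dots,(i_n,j_n)\}$ and setting $u_k := \beta^{i_k}\gamma^{j_k}$, so that $A|_S = \{u_1,\dots,u_n\}$, I would first observe that a translation by $(a,b)$ acts on the value pattern purely by multiplication: since $B_{i+a,j+b} = \psi(\beta^{i+a}\gamma^{j+b}) = \psi\big((\beta^a\gamma^b)\,\beta^i\gamma^j\big)$, the translated value pattern is
\[
B|_{T_{a,b}(S)} = \big(\psi(x u_1),\,\dots,\,\psi(x u_n)\big), \qquad x := \beta^a\gamma^b .
\]
By the bijection coming from $\gcd(s,t)=1$, as $(a,b)$ ranges over $\mathbb{Z}_s\times\mathbb{Z}_t$ the element $x$ ranges over $\mathbb{F}_{p^n}^\times$ exactly once. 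Hence I define the $\mathbb{F}_p$-linear map
\[
\Phi:\mathbb{F}_{p^n}\to\mathbb{F}_p^n,\qquad \Phi(x):=\big(\psi(x u_1),\dots,\psi(x u_n)\big),
\]
and note that the collection of value patterns is precisely $\{\Phi(x) : x\in\mathbb{F}_{p^n}^\times\}$.

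Second, I would reduce the sampling condition to the bijectivity of $\Phi$. Since $\Phi$ is linear with $\Phi(0)=0$, and since $|\mathbb{F}_{p^n}^\times| = p^n-1 = |\mathbb{F}_p^n\setminus\{0\}|$, the map $\Phi$ restricts to a bijection $\mathbb{F}_{p^n}^\times \to \mathbb{F}_p^n\setminus\{0\}$ if and only if $\Phi$ is an $\mathbb{F}_p$-linear isomorphism; a dimension count makes injectivity, surjectivity, and the restricted bijection all equivalent. Thus $S$ is a sampling pattern exactly when $\Phi$ is an isomorphism, and it remains to show that $\Phi$ is an isomorphism if and only if $A|_S$ is a basis of $\mathbb{F}_{p^n}$.

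The key step uses the bilinear pairing $\langle x,y\rangle := \psi(xy)$ on $\mathbb{F}_{p^n}$. This pairing is nondegenerate: for $x\neq 0$ the map $y\mapsto xy$ is a bijection of $\mathbb{F}_{p^n}$, so $\psi(xy)=0$ for all $y$ would force $\psi\equiv 0$, contradicting the hypothesis. Equivalently, one may write $\psi(z)=\tr(cz)$ for a unique nonzero $c$. Now $x\in\ker\Phi$ iff $\psi(x u_k)=0$ for every $k$, which by linearity is equivalent to $\psi(xw)=0$ for all $w\in W:=\mathrm{span}_{\mathbb{F}_p}(A|_S)$; that is, $\ker\Phi = W^{\perp}$. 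Nondegeneracy gives $\dim W^\perp = n - \dim W$, so $\Phi$ is injective (hence an isomorphism) precisely when $\dim W = n$, i.e. when the $n$ elements of $A|_S$ span $\mathbb{F}_{p^n}$ and therefore form a basis. Chaining the equivalences yields the theorem.

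I expect the main conceptual point to be the identification $\ker\Phi = W^{\perp}$ together with the nondegeneracy of $\psi(xy)$; everything else is bookkeeping, namely the translation-to-multiplication reparametrization and the elementary dimension count. A minor point to handle cleanly is the convention that value patterns are ordered tuples indexed by a fixed enumeration of $S$, so that ``covering each nonzero vector exactly once'' corresponds to the bijectivity of $\Phi$ rather than to a set-equality that ignores multiplicity.
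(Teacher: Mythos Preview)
Your proof is correct and shares the paper's central device: the linear map $\Phi(x)=(\psi(xu_1),\dots,\psi(xu_n))$ together with the observation that nondegeneracy of $(x,y)\mapsto\psi(xy)$ is what makes the argument work. The difference is in packaging. The paper argues the equivalence ``$\Phi$ is an isomorphism $\Leftrightarrow$ $A|_S$ is a basis'' by two separate direction checks (for ($\Leftarrow$) it notes that $\{xz:x\in A|_S\}$ is again a basis when $z\ne 0$, so $\psi$ would vanish identically), whereas you obtain both directions in one stroke via $\ker\Phi=W^{\perp}$ and the dimension formula $\dim W^{\perp}=n-\dim W$; this is a cleaner, more symmetric formulation of the same underlying fact. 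You are also more explicit than the paper in reducing the sampling condition to the bijectivity of $\Phi$ (the paper leaves the translation-to-multiplication reparametrization and the counting step implicit), which is a genuine improvement in completeness.
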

\begin{proof}
Define an \(\mathbb{F}_p\)-linear map
\[
\Phi: \mathbb{F}_{p^n} \to \mathbb{F}_p^{S}, \quad
\Phi(z) := \left( \psi(x z) \right)_{x \in A|_S}.
\]
We will show that \(\Phi\) is an isomorphism if and only if \( A|_S \) forms a basis of \( \mathbb{F}_{p^n} \) over \(\mathbb{F}_p\).

\textbf{(\(\Rightarrow\))} Suppose \(\Phi\) is a linear isomorphism. Then the coordinate functionals
\[
z \mapsto \psi(x z), \quad \text{for each } x \in A|_S,
\]
must form an \(\mathbb{F}_p\)-linearly independent set in the dual space \( \mathrm{Hom}_{\mathbb{F}_p}(\mathbb{F}_{p^n}, \mathbb{F}_p) \). Suppose for contradiction that the set \( A|_S \) is linearly dependent, so there exists a nontrivial relation
\[
\sum_{x \in A|_S} a_x x = 0, \quad \text{with } a_x \in \mathbb{F}_p, \text{ not all zero}.
\]
Then for every \( z \in \mathbb{F}_{p^n} \), we have
\[
\sum_{x} a_x \psi(x z) = \psi\left( \left( \sum_{x} a_x x \right) z \right) = \psi(0) = 0,
\]
which implies that the linear combination \( \sum_{x} a_x \psi(x z) \) vanishes identically as a function of \( z \). This contradicts the assumption that the coordinate functionals are linearly independent. Hence, \( A|_S \) must be linearly independent. Since \( |S| = n = \dim_{\mathbb{F}_p} \mathbb{F}_{p^n} \), it follows that \( A|_S \) is a basis.

\textbf{(\(\Leftarrow\))} Conversely, suppose \( A|_S \) is an \(\mathbb{F}_p\)-basis of \( \mathbb{F}_{p^n} \). We prove that \(\Phi\) is an isomorphism by showing it is injective.

Assume \( \Phi(z) = 0 \) for some \( z \in \mathbb{F}_{p^n} \). Then \( \psi(x z) = 0 \) for all \( x \in A|_S \). Since \( A|_S \) is a basis and \( z \ne 0 \), the set \( \{ x z \mid x \in A|_S \} \) also forms a basis of \( \mathbb{F}_{p^n} \) (as multiplication by a nonzero scalar is an automorphism of the vector space). Therefore, \( \psi \) vanishes on a basis and hence on all of \( \mathbb{F}_{p^n} \), which contradicts the assumption that \( \psi \) is nonzero. Thus \( z = 0 \), and \(\Phi\) is injective.

Since both the domain and codomain of \(\Phi\) are \( \mathbb{F}_p \)-vector spaces of dimension \( n \), injectivity implies that \(\Phi\) is an isomorphism.
\end{proof}

\subsection{Construction of Sampling Patterns}

We begin with an explicit construction based on the Kronecker (tensor product) structure of the field extension.

Suppose \( s = p^m - 1 \) for some \( m \mid n \), and define
\[
t = \frac{p^n - 1}{s} = \frac{p^n - 1}{p^m - 1}.
\]
Assume further that \( \gcd(s, t) = 1 \), so that the elements
\[
\beta = \alpha^t \in \mathbb{F}_{p^m}^\times, \quad \gamma = \alpha^s \in \mathbb{F}_{p^n}^\times
\]
generate subgroups whose product spans all of \( \mathbb{F}_{p^n}^\times \), i.e., \( \mathbb{F}_{p^n}^\times = \langle \beta \rangle \times \langle \gamma \rangle \).

Then:
\begin{itemize}
    \item The field \( \mathbb{F}_{p^m} = \mathbb{F}_p(\beta) \) is a degree-\( m \) extension of \( \mathbb{F}_p \).
    \item The field \( \mathbb{F}_{p^n} = \mathbb{F}_{p^m}(\gamma) \) is a degree-\( \frac{n}{m} \) extension of \( \mathbb{F}_{p^m} \), so \( \mathbb{F}_{p^n} = \mathbb{F}_p(\beta, \gamma) \).
\end{itemize}

It follows that the set
\[
\mathcal{B} = \left\{ \beta^i \gamma^j \ \middle|\ 0 \le i < m,\ 0 \le j < \frac{n}{m} \right\}
\]
forms an \(\mathbb{F}_p\)-basis of \( \mathbb{F}_{p^n} \). This is called a \emph{Kronecker basis}.

The corresponding sampling pattern \( S \subset \mathbb{Z}_s \times \mathbb{Z}_t \) is defined by
\[
S = \left\{ (i,j) \in \mathbb{Z}_s \times \mathbb{Z}_t \ \middle|\ 0 \le i < m,\ 0 \le j < \frac{n}{m} \right\}.
\]
Then \( A|_S = \{ \beta^i \gamma^j \mid (i,j) \in S \} = \mathcal{B} \) is an \(\mathbb{F}_p\)-basis, so \( S \) is a \emph{basis pattern}, and thus a valid sampling pattern. Geometrically, it forms a rectangular grid of size \( m \times \frac{n}{m} \), providing a structured example of a sampling pattern.

A more flexible strategy is to build up a sampling pattern from repeated translations of a smaller linearly independent pattern. This method relies on the following lemma.

\begin{lemma}
Let \( V, W \subset \mathbb{F}_{p^n} \) be \(\mathbb{F}_p\)-subspaces such that
\[
\dim_{\mathbb{F}_p}(V) + \dim_{\mathbb{F}_p}(W) \leq n.
\]
Then there exists \( z \in \mathbb{F}_{p^n}^\times \) such that
\[
V \cap zW = \{0\}.
\]
\end{lemma}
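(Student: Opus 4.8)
The plan is to argue by counting. Write \( a = \dim_{\mathbb{F}_p} V \) and \( b = \dim_{\mathbb{F}_p} W \), so that \( a + b \le n \). If either subspace is trivial the claim is immediate, since then \( V \cap zW = \{0\} \) for every \( z \); so I would assume \( a, b \ge 1 \). The first step is to rephrase the intersection condition multiplicatively. For a fixed \( z \in \mathbb{F}_{p^n}^\times \), the space \( V \cap zW \) contains a nonzero vector exactly when there exist nonzero \( v \in V \) and \( w \in W \) with \( v = z w \), i.e. \( z = v w^{-1} \). Hence the set of \emph{bad} values of \( z \), those for which \( V \cap zW \ne \{0\} \), is precisely the image of the map
\[
(V \setminus \{0\}) \times (W \setminus \{0\}) \to \mathbb{F}_{p^n}^\times, \quad (v,w) \mapsto v w^{-1}.
\]

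The second step is to bound the size of this bad set. Being the image of a set of cardinality \( (p^a - 1)(p^b - 1) \), the number of bad \( z \) is at most \( (p^a - 1)(p^b - 1) \); we do not need the map to be injective, only this crude upper bound. The third step compares this quantity with the total number \( p^n - 1 \) of available nonzero scalars. Expanding and using \( a + b \le n \),
\[
(p^a - 1)(p^b - 1) = p^{a+b} - p^a - p^b + 1 \le p^n - (p^a + p^b) + 1.
\]
Because \( a, b \ge 1 \) and \( p \ge 2 \), we have \( p^a + p^b \ge 2p \ge 4 > 2 \), so the right-hand side is strictly less than \( p^n - 1 \). Therefore the bad set does not exhaust \( \mathbb{F}_{p^n}^\times \), and there exists a nonzero \( z \) with \( V \cap zW = \{0\} \).

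I expect no serious obstacle here, as the argument is essentially a pigeonhole count. The only points demanding a little care are the degenerate cases \( a = 0 \) or \( b = 0 \), which are handled separately, and ensuring that the final inequality is \emph{strict} rather than merely \( \le \); this is exactly where the hypothesis \( a, b \ge 1 \) enters, guaranteeing \( p^a + p^b > 2 \). One could alternatively phrase the counting in terms of pairs of lines through the origin, but the image bound above is the cleanest route to the strict inequality.
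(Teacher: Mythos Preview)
Your proof is correct and follows essentially the same counting argument as the paper: both identify the bad scalars \(z\) as quotients \(v w^{-1}\) with \(v \in V \setminus \{0\}\), \(w \in W \setminus \{0\}\), bound their number by \((p^a-1)(p^b-1)\), and observe this is strictly less than \(p^n-1\). You are simply more explicit than the paper about the degenerate cases and about why the final inequality is strict.
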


\begin{proof}
Let \( \dim V = r \) and \( \dim W = s \), so \( r + s \leq n \). Suppose, for contradiction, that for every \( z \in \mathbb{F}_{p^n}^\times \), we have \( V \cap zW \ne \{0\} \). Then for each such \( z \), there exist nonzero \( v \in V \), \( w \in W \) such that \( z = v/w \).

The number of such quotients is at most
\[
(p^r - 1)(p^s - 1) < p^n - 1,
\]
which is the total number of elements in \( \mathbb{F}_{p^n}^\times \). Hence, not every \( z \in \mathbb{F}_{p^n}^\times \) arises this way, and there exists a \( z \in \mathbb{F}_{p^n}^\times \) such that \( V \cap zW = \{0\} \).
\end{proof}

We can now state the following theorem, which allows us to extend any linearly independent pattern by a suitable translation of another.

\begin{theorem}[Extension of Independent Patterns]
Let \( S_1, S_2 \subset \mathbb{Z}_s \times \mathbb{Z}_t \) be two disjoint linearly independent patterns, of sizes \( m_1 \) and \( m_2 \), respectively. Suppose that the associated field elements \( A|_{S_1} \) and \( A|_{S_2} \) span subspaces \( V_1, V_2 \subset \mathbb{F}_{p^n} \) with
\[
\dim_{\mathbb{F}_p}(V_1) + \dim_{\mathbb{F}_p}(V_2) \le n.
\]
Then there exists a toroidal shift \( (a, b) \in \mathbb{Z}_s \times \mathbb{Z}_t \) such that the translated pattern \( T_{a,b}(S_2) \) is disjoint from \( S_1 \), and the combined pattern
\[
S := S_1 \cup T_{a,b}(S_2)
\]
is linearly independent. In particular, this process may be repeated until a basis pattern of size \( n \) is obtained.
\end{theorem}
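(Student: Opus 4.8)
The plan is to reduce the statement to a direct application of the preceding lemma by recognizing that a toroidal translation of a pattern acts on its associated field elements by scalar multiplication. First I would record the orders of the two generators: since \( p^n - 1 = st \) with \( \gcd(s,t) = 1 \), the element \( \beta = \alpha^t \) has order \( s \) and \( \gamma = \alpha^s \) has order \( t \). Consequently \( \beta^{(i+a)\bmod s} = \beta^{i+a} \) and \( \gamma^{(j+b)\bmod t} = \gamma^{j+b} \), so the reductions modulo \( s \) and \( t \) built into \( T_{a,b} \) are invisible at the level of powers. This yields the key identity
\[
A|_{T_{a,b}(S_2)} = \{\beta^{i+a}\gamma^{j+b} : (i,j)\in S_2\} = z \cdot A|_{S_2}, \qquad z := \beta^a\gamma^b,
\]
and, because \( (i,j)\mapsto \beta^i\gamma^j \) is a bijection onto \( \mathbb{F}_{p^n}^\times \), the scalar \( z \) ranges over all of \( \mathbb{F}_{p^n}^\times \) as \( (a,b) \) ranges over \( \mathbb{Z}_s\times\mathbb{Z}_t \).

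With this reduction in hand, I would apply the lemma with \( V = V_1 \) and \( W = V_2 \); the hypothesis \( \dim V_1 + \dim V_2 \le n \) is exactly what the lemma requires, so it produces a nonzero \( z \) with \( V_1 \cap zV_2 = \{0\} \). Writing \( z = \beta^a\gamma^b \) supplies the candidate shift \( (a,b) \). For linear independence I would argue that \( A|_{S_1} \) is a basis of \( V_1 \) and \( z\cdot A|_{S_2} \) is a basis of \( zV_2 \) (multiplication by \( z \) is an \( \mathbb{F}_p \)-linear automorphism), so the transversality \( V_1 \cap zV_2 = \{0\} \) makes \( V_1 + zV_2 \) a direct sum of dimension \( m_1 + m_2 \) and the union of the two bases a basis of it; hence \( A|_{S_1 \cup T_{a,b}(S_2)} \) is linearly independent of the full size \( m_1+m_2 \).

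The step I would expect to be the apparent obstacle is securing disjointness of \( S_1 \) and \( T_{a,b}(S_2) \) simultaneously with independence, since a priori one might fear having to dodge the \( \le m_1 m_2 \) overlap-inducing shifts on top of the independence constraint. The clean observation that removes this obstacle is that disjointness is already forced by \( V_1 \cap zV_2 = \{0\} \): every element of \( A|_{S_1} \) lies in \( V_1 \) and every element of \( A|_{T_{a,b}(S_2)} = z\cdot A|_{S_2} \) lies in \( zV_2 \), so any shared field element would lie in \( V_1 \cap zV_2 = \{0\} \); but all these elements are nonzero, so the two sets are disjoint, and by the position-to-element bijection the position sets \( S_1 \) and \( T_{a,b}(S_2) \) are disjoint too. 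Should one prefer to avoid this observation, a union bound also works: the overlap-inducing shifts number at most \( m_1 m_2 \) and the independence-violating values of \( z \) number at most \( (p^{m_1}-1)(p^{m_2}-1) \), and \( m_1 m_2 + (p^{m_1}-1)(p^{m_2}-1) < p^n - 1 \) follows from \( m_1 + m_2 \le n \) by the elementary inequality \( m_1 m_2 + 2 < p^{m_1} + p^{m_2} \).

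Finally I would close the induction: the resulting \( S = S_1 \cup T_{a,b}(S_2) \) is a linearly independent pattern of size \( m_1 + m_2 \), so starting from, say, a single-point pattern (trivially independent) and repeatedly absorbing translated independent tiles increases the size additively while the running dimension stays \( \le n \); the process terminates at size \( n \), at which point \( A|_S \) is an \( \mathbb{F}_p \)-basis, i.e. a basis pattern, as claimed.
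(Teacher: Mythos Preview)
Your proposal is correct and follows exactly the route the paper intends: the theorem is stated immediately after the lemma with no separate proof, so the paper is implicitly relying on the very reduction you spell out---that a toroidal shift \( (a,b) \) acts on \( A|_{S_2} \) as scalar multiplication by \( z = \beta^a\gamma^b \), after which the lemma supplies a \( z \) with \( V_1 \cap zV_2 = \{0\} \). Your observation that disjointness of \( S_1 \) and \( T_{a,b}(S_2) \) is automatically forced by transversality (via the position-to-element bijection) is a clean detail the paper leaves unsaid.
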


\begin{corollary}[Recursive Construction via Repetition]
Let \( S_0 \subset \mathbb{Z}_s \times \mathbb{Z}_t \) be a linearly independent pattern of size \( m \mid n \). Then there exist disjoint toroidal shifts \( (a_1, b_1), \dots, (a_r, b_r) \in \mathbb{Z}_s \times \mathbb{Z}_t \) with \( r = \frac{n}{m} \), such that the union
\[
S := \bigcup_{k=1}^{r} T_{a_k, b_k}(S_0)
\]
forms a basis pattern, and hence a sampling pattern.
\end{corollary}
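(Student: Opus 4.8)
The plan is to prove the corollary by induction on the number of translated tiles, applying the Extension of Independent Patterns theorem once at each step. Throughout I would maintain the invariant that after $k$ extensions I have a linearly independent pattern $S^{(k)}$ of size $km$, realized as a disjoint union $S^{(k)} = \bigcup_{j=1}^{k} T_{a_j,b_j}(S_0)$ of $k$ translates of $S_0$. The base case is $S^{(1)} := S_0$, obtained with the trivial shift $(a_1,b_1)=(0,0)$; this is linearly independent of size $m$ by hypothesis, so the invariant holds for $k=1$.

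For the inductive step, I would assume $1 \le k < r$ and that $S^{(k)}$ satisfies the invariant, so that $A|_{S^{(k)}}$ spans a subspace $V_1 \subset \mathbb{F}_{p^n}$ of dimension $km$. Setting $S_1 := S^{(k)}$ and $S_2 := S_0$, whose associated span $V_2$ has dimension $m$, the dimension hypothesis of the Extension theorem asks that $\dim_{\mathbb{F}_p} V_1 + \dim_{\mathbb{F}_p} V_2 = (k+1)m \le n$. Since $m \mid n$ and $r = n/m$, the condition $k+1 \le r$ gives $(k+1)m \le rm = n$, with equality precisely at the last step. The theorem then supplies a shift $(a_{k+1},b_{k+1})$ for which $T_{a_{k+1},b_{k+1}}(S_0)$ is disjoint from $S^{(k)}$ and the union $S^{(k+1)} := S^{(k)} \cup T_{a_{k+1},b_{k+1}}(S_0)$ is linearly independent of size $(k+1)m$. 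This reestablishes the invariant, and because each newly added translate is disjoint from the union of all previous ones, the resulting $r$ translates are pairwise disjoint.

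Iterating until $k = r$ produces $S := S^{(r)}$, a linearly independent pattern of size $rm = n$. A linearly independent pattern of full size $n$ is by definition a basis pattern, and the main Theorem of this section shows that any size-$n$ basis pattern is a sampling pattern; this delivers the conclusion.

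The single delicate point, which I expect to be the main obstacle, is the input hypothesis of the Extension theorem requiring $S_1$ and $S_2$ to be disjoint, whereas at the first step $S_1 = S_2 = S_0$. I believe this is an obstacle only superficially: the Extension theorem selects, via the preceding Lemma, an element $z \in \mathbb{F}_{p^n}^\times$ with $V_1 \cap z V_2 = \{0\}$ and converts it into a unique toroidal shift, after which the trivial intersection of the two spans forces both the linear independence of the combined pattern and the disjointness of the underlying position sets. Disjointness of the inputs is therefore never used, so the theorem applies verbatim with $S_2 = S_0$ at every stage. If one prefers to respect the stated hypothesis literally, a short counting argument—the number of shifts carrying a point of $S_0$ into $S^{(k)}$ is at most $m \cdot km$, which is smaller than $st = p^n - 1$—first produces a translate of $S_0$ disjoint from $S^{(k)}$ to feed into the theorem. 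Either way, the dimension bookkeeping above is the only substantive ingredient.
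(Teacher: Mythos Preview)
Your proposal is correct and follows precisely the approach the paper intends: the corollary is stated without proof immediately after the Extension theorem, and the implicit argument is exactly your induction on the number of tiles, applying that theorem once per step with the dimension check $(k+1)m \le n$. Your observation that the input disjointness hypothesis on $S_1, S_2$ in the Extension theorem is superfluous---because the lemma's choice of $z$ with $V_1 \cap zV_2 = \{0\}$ already forces the output positions to be disjoint---is correct and neatly resolves the only wrinkle in the induction.
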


\subsection{Structure and Trace Patterns in the De Bruijn Torus}

Up to toroidal translation, the combinatorial structure of \( B \) is independent of the choice of \( \psi \), so we may take \( \psi \) to be the field trace:
\[
\psi(x) = \mathrm{tr}_{\mathbb{F}_{p^n}/\mathbb{F}_p}(x) = x + x^p + \cdots + x^{p^{n-1}}.
\]
We call \( B \) the \textbf{trace De Bruijn torus} over \( \mathbb{F}_p \).

Suppose \( m \mid n \) so that \( \mathbb{F}_{p^m} \subset \mathbb{F}_{p^n} \), and let
\[
s = p^m - 1, \quad t = \frac{p^n - 1}{s}, \quad \beta = \alpha^t \in \mathbb{F}_{p^m}^\times, \quad \gamma = \alpha^s.
\]
Furthermore, we assume that \(\gcd(s,t)=1\). 

Define the standard (1-dimensional) trace De Bruijn sequence over \( \mathbb{F}_{p^m}^\times \) as
\[
\mathsf{DB}_\beta := \left( \mathrm{tr}_{\mathbb{F}_{p^m}/\mathbb{F}_p}(\beta^i) \right)_{0 \leq i < s}.
\]
For any integer \( r \), let \( \mathsf{DB}_\beta^{[r]} \) denote the cyclic shift of \( \mathsf{DB}_\beta \) by \( r \) positions, that is,
\[
\mathsf{DB}_\beta^{[r]}(i) := \mathrm{tr}_{\mathbb{F}_{p^m}/\mathbb{F}_p}(\beta^{r+i}), \quad \text{for } 0 \le i < s,
\]
where indices are taken modulo \( s \).

Then the entries of the trace De Bruijn torus \( B \) are given by:
\[
B_{i,j} = \mathrm{tr}_{\mathbb{F}_{p^n}/\mathbb{F}_p}(\beta^i \gamma^j) = \mathrm{tr}_{\mathbb{F}_{p^m}/\mathbb{F}_p}\left( \mathrm{tr}_{\mathbb{F}_{p^n}/\mathbb{F}_{p^m}}(\gamma^j) \cdot \beta^i \right).
\]

\begin{theorem}[Structure of the Trace De Bruijn Torus Columns]
Let \( B_{i,j} \) be as above. Then each column is determined by the trace \( \mathrm{tr}_{\mathbb{F}_{p^n}/\mathbb{F}_{p^m}}(\gamma^j) \in \mathbb{F}_{p^m} \), and falls into one of the following:
\begin{enumerate}
    \item If \( \mathrm{tr}_{\mathbb{F}_{p^n}/\mathbb{F}_{p^m}}(\gamma^j) = 0 \), the column is identically zero.
    \item If \( \mathrm{tr}_{\mathbb{F}_{p^n}/\mathbb{F}_{p^m}}(\gamma^j) = \beta^r \in \mathbb{F}_{p^m}^\times \), then the column is the cyclic shift \( \mathsf{DB}_\beta^{[r]} \).
\end{enumerate}
\end{theorem}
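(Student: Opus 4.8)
The plan is to push everything through the nested-trace identity already displayed, namely
\[
B_{i,j} = \mathrm{tr}_{\mathbb{F}_{p^m}/\mathbb{F}_p}\!\left( \mathrm{tr}_{\mathbb{F}_{p^n}/\mathbb{F}_{p^m}}(\gamma^j)\cdot \beta^i \right),
\]
whose validity rests only on the fact that \( \beta^i \in \mathbb{F}_{p^m} \) may be pulled out of the inner trace by \(\mathbb{F}_{p^m}\)-linearity. Writing \( c_j := \mathrm{tr}_{\mathbb{F}_{p^n}/\mathbb{F}_{p^m}}(\gamma^j) \in \mathbb{F}_{p^m} \), this expresses the entire \(j\)-th column as the single scalar \(c_j\) fed through the row map \( i \mapsto \mathrm{tr}_{\mathbb{F}_{p^m}/\mathbb{F}_p}(c_j\,\beta^i) \), so the column is completely determined by \(c_j\), as claimed.

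First I would dispose of the degenerate case: if \(c_j = 0\) then \( B_{i,j} = \mathrm{tr}_{\mathbb{F}_{p^m}/\mathbb{F}_p}(0) = 0 \) for every \(i\), which is case (1). For the nondegenerate case I would first establish the structural fact that \(\beta\) generates \( \mathbb{F}_{p^m}^\times \): since \(\alpha\) is primitive in \( \mathbb{F}_{p^n} \), the order of \( \beta = \alpha^t \) equals \( (p^n-1)/\gcd(t,p^n-1) \), and from \( p^n - 1 = st \) with \( \gcd(s,t)=1 \) one gets \( \gcd(t,st) = t \), so \(\beta\) has order \( s = p^m - 1 \) and thus runs through all of \( \mathbb{F}_{p^m}^\times \). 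Hence every nonzero value can be written as \( c_j = \beta^r \) for a unique \( r \bmod s \), and substituting gives
\[
B_{i,j} = \mathrm{tr}_{\mathbb{F}_{p^m}/\mathbb{F}_p}(\beta^{r}\beta^{i}) = \mathrm{tr}_{\mathbb{F}_{p^m}/\mathbb{F}_p}(\beta^{r+i}) = \mathsf{DB}_\beta^{[r]}(i),
\]
which is exactly the \(r\)-shifted trace De Bruijn sequence, completing case (2).

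The arithmetic here is routine, so the only real obstacle is the order computation guaranteeing that \(\beta\) generates \( \mathbb{F}_{p^m}^\times \): this is what makes the exponent \(r\) well-defined and, crucially, what ensures the two listed cases are exhaustive rather than merely illustrative. I would also flag the bookkeeping point that the indices of \( \mathsf{DB}_\beta^{[r]} \) are read modulo \(s\), matching the \(s\) rows of the torus, so no special handling is needed when \( r + i \ge s \).
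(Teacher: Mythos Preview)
Your proposal is correct and matches the paper's approach exactly: the paper states the nested-trace identity
\(
B_{i,j} = \mathrm{tr}_{\mathbb{F}_{p^m}/\mathbb{F}_p}\bigl( \mathrm{tr}_{\mathbb{F}_{p^n}/\mathbb{F}_{p^m}}(\gamma^j)\cdot \beta^i \bigr)
\)
immediately before the theorem and treats the result as a direct consequence, without a separate proof environment. Your write-up simply fills in the routine case split and, helpfully, makes explicit the order computation showing that \(\beta\) generates \(\mathbb{F}_{p^m}^\times\), which the paper leaves implicit in its setup.
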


\paragraph{Frobenius Orbits and Column Multiplicity.}
Two columns \( j, j' \) are equal if
\[
\mathrm{tr}_{\mathbb{F}_{p^n}/\mathbb{F}_{p^m}}(\gamma^j) = \mathrm{tr}_{\mathbb{F}_{p^n}/\mathbb{F}_{p^m}}(\gamma^{j'}).
\]
This always holds when \( \gamma^j \) and \( \gamma^{j'} \) are Frobenius conjugates over \( \mathbb{F}_{p^m} \), i.e., \( \gamma^{j'} = (\gamma^j)^{p^{mi}} \). Therefore, the Galois group \( \mathrm{Gal}(\mathbb{F}_{p^n}/\mathbb{F}_{p^m}) \) acts on the columns, and each orbit contributes to (at most) one distinct column.

\paragraph{Polynomial Interpretation.}
This Galois structure corresponds to the factorization:
\[
x^t - 1 = \prod_{i=0}^{t-1} (x - \gamma^i) = \prod_{\mathcal{O}} m_{\mathcal{O}}(x),
\]
where each \( m_{\mathcal{O}}(x) \in \mathbb{F}_{p^m}[x] \) is the minimal polynomial of a Frobenius orbit \( \mathcal{O} \subset \langle \gamma \rangle \). Each such polynomial has the form:
\[
m_{\mathcal{O}}(x) = x^d - \beta^r x^{d-1} + \cdots,
\]
with \( -\beta^r \) being the trace of any \( \gamma^j \in \mathcal{O} \).

\begin{theorem}[Occurrence Criterion for \( \mathsf{DB}_\beta^{[r]} \)]
Let \( \beta \in \mathbb{F}_{p^m}^\times \), and let \( \mathsf{DB}_\beta \) be the standard trace De Bruijn sequence of length \( s = p^m - 1 \). Then the shifted sequence \( \mathsf{DB}_\beta^{[r]} \) appears as a column in the trace De Bruijn torus if and only if \( -\beta^r \) occurs as the second-leading coefficient of some irreducible factor of \( x^t - 1 \in \mathbb{F}_{p^m}[x] \), where \( t = \frac{p^n - 1}{p^m - 1} \).

Moreover, the total number of columns equal to \( \mathsf{DB}_\beta^{[r]} \) is equal to the sum of degrees of all such irreducible factors:
\[
\#\left\{ j : \mathrm{tr}_{\mathbb{F}_{p^n}/\mathbb{F}_{p^m}}(\gamma^j) = \beta^r \right\} 
= \sum_{\substack{m_{\mathcal{O}}(x) \mid x^t - 1 \\ \text{second-leading coeff } = -\beta^r}} \deg m_{\mathcal{O}}(x).
\]
\end{theorem}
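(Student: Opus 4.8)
The plan is to recast the problem through the action of the Galois group $\mathrm{Gal}(\mathbb{F}_{p^n}/\mathbb{F}_{p^m})$ on $\langle\gamma\rangle$, reading the column value $c_j:=\mathrm{tr}_{\mathbb{F}_{p^n}/\mathbb{F}_{p^m}}(\gamma^j)$ off the irreducible factors of $x^t-1$. By the preceding Structure theorem, column $j$ equals $\mathsf{DB}_\beta^{[r]}$ precisely when $c_j=\beta^r$, so both assertions reduce to understanding the fibers of the map $j\mapsto c_j$. First I would note that the trace is $\mathrm{Gal}(\mathbb{F}_{p^n}/\mathbb{F}_{p^m})$-invariant, hence $c_j$ is constant on each Frobenius orbit $\mathcal{O}\subset\langle\gamma\rangle$; write $c_\mathcal{O}$ for this common value. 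Because $\gcd(t,p)=1$ the polynomial $x^t-1$ is separable and its roots are exactly $\langle\gamma\rangle$, so the orbit partition is literally the factorization $x^t-1=\prod_\mathcal{O} m_\mathcal{O}(x)$ into distinct monic irreducibles, with $\deg m_\mathcal{O}=|\mathcal{O}|$ and the roots of $m_\mathcal{O}$ being the members of $\mathcal{O}$. This immediately yields $\#\{j:c_j=\beta^r\}=\sum_{\mathcal{O}:\,c_\mathcal{O}=\beta^r}|\mathcal{O}|=\sum_{\mathcal{O}:\,c_\mathcal{O}=\beta^r}\deg m_\mathcal{O}$, which is the claimed degree-sum count, and the appearance criterion is the special case that this fiber is nonempty.

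It then remains to identify the condition $c_\mathcal{O}=\beta^r$ with the coefficient condition in the statement. Fixing $\gamma^j\in\mathcal{O}$ and setting $d=|\mathcal{O}|=[\mathbb{F}_{p^m}(\gamma^j):\mathbb{F}_{p^m}]$, Vieta's formulas give that the second-leading coefficient of $m_\mathcal{O}$ is the negative of the sum of its roots, namely $-\sum_{i=0}^{d-1}(\gamma^j)^{p^{mi}}=-\mathrm{tr}_{\mathbb{F}_{p^{md}}/\mathbb{F}_{p^m}}(\gamma^j)$. I would then relate this orbit sum to the full trace $c_\mathcal{O}$ through the tower law $\mathrm{tr}_{\mathbb{F}_{p^n}/\mathbb{F}_{p^m}}=\mathrm{tr}_{\mathbb{F}_{p^{md}}/\mathbb{F}_{p^m}}\circ\mathrm{tr}_{\mathbb{F}_{p^n}/\mathbb{F}_{p^{md}}}$, using that $\gamma^j\in\mathbb{F}_{p^{md}}$.

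The main obstacle lies in exactly this last comparison. Since $\gamma^j$ already sits in $\mathbb{F}_{p^{md}}$, the inner trace is multiplication by the degree, $\mathrm{tr}_{\mathbb{F}_{p^n}/\mathbb{F}_{p^{md}}}(\gamma^j)=\tfrac{n}{md}\,\gamma^j$, so that $c_\mathcal{O}=\tfrac{n}{md}\,\mathrm{tr}_{\mathbb{F}_{p^{md}}/\mathbb{F}_{p^m}}(\gamma^j)$. Equivalently, $-c_\mathcal{O}$ is the second-leading coefficient of the characteristic polynomial $m_\mathcal{O}(x)^{n/(md)}$ of $\gamma^j$ over $\mathbb{F}_{p^m}$, which agrees with the second-leading coefficient of the irreducible factor $m_\mathcal{O}$ itself only when the multiplicity $n/(md)$ is trivial in $\mathbb{F}_p$. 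Thus the clean equivalence between $c_\mathcal{O}=\beta^r$ and the condition that the second-leading coefficient of $m_\mathcal{O}$ equals $-\beta^r$ requires $n/(md)\equiv1\pmod p$ on every orbit of nonzero trace; the delicate case is the orbit $\{1\}$ (index $j=0$), where $d=1$ and the factor is $n/m$ — for instance in characteristic $2$ with $n/m=2$ this orbit yields the zero column even though the factor $x-1$ has a nonzero second-leading coefficient. I would therefore either impose that every $t$-th root of unity generates $\mathbb{F}_{p^n}$ over $\mathbb{F}_{p^m}$ together with $n/m\equiv1\pmod p$ (forcing all multiplicities to act as the identity), or else carry the scalar $n/(md)$ explicitly through the statement and treat the subfield orbits, in particular $j=0$, separately. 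Establishing the triviality of this multiplicity for every nonzero column is where the real work of the proof concentrates; granting it, the orbit/factor dictionary of the first paragraph delivers both the appearance criterion and the degree-sum formula at once.
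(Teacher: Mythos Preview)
Your argument is the paper's own: the paper gives no formal proof, instead deriving the theorem from the two preceding paragraphs (``Frobenius Orbits and Column Multiplicity'' and ``Polynomial Interpretation''), which set up exactly the orbit/factor dictionary you describe --- Galois orbits on $\langle\gamma\rangle$, the separable factorization $x^t-1=\prod_\mathcal{O} m_\mathcal{O}$, and reading the column trace off the second-leading coefficient via Vieta.

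The difference is that you have been more careful than the paper. The paper asserts without comment that the second-leading coefficient of $m_\mathcal{O}$ equals $-\mathrm{tr}_{\mathbb{F}_{p^n}/\mathbb{F}_{p^m}}(\gamma^j)$; you correctly note that Vieta gives only $-\mathrm{tr}_{\mathbb{F}_{p^{md}}/\mathbb{F}_{p^m}}(\gamma^j)$, and that the tower law inserts the scalar $n/(md)$ between the two. Your concern is not hypothetical: in the paper's own degree-$2$ special case (characteristic $2$, $n=2m$), the linear factor $x-1$ of $x^t-1$ has second-leading coefficient $-1=-\beta^0$, yet column $j=0$ has $c_0=\mathrm{tr}_{\mathbb{F}_{2^n}/\mathbb{F}_{2^m}}(1)=0$ and is the zero column, not $\mathsf{DB}_\beta^{[0]}$. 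The paper silently avoids this in its subsequent degree-$2$ theorem by phrasing the criterion in terms of the quadratic factors $x^2+\beta^i x+1$ only. So what you flagged as the ``main obstacle'' is not a defect in your reasoning but an imprecision in the theorem as stated, one the paper does not address; either of your proposed fixes --- restricting to orbits with $n/(md)\equiv 1\pmod p$, or carrying the scalar explicitly and handling subfield orbits (in particular $j=0$) separately --- is the right way to make the statement and proof precise.
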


\subsection{Special Case: Degree-2 Extension and Irreducible Polynomials}

We now consider a special case of the trace De Bruijn torus when \( n = 2m \), so that \( \mathbb{F}_{2^n} \) is a quadratic extension of \( \mathbb{F}_{2^m} \). Let \( \gamma \in \mathbb{F}_{2^n}^\times \) be an element of order \( 2^n + 1 \), and let \( \beta \in \mathbb{F}_{2^m}^\times \) be a fixed generator.

The trace map from \( \mathbb{F}_{2^n} \) to \( \mathbb{F}_{2^m} \) is given by
\[
\mathrm{tr}_{\mathbb{F}_{2^n}/\mathbb{F}_{2^m}}(x) = x + x^{2^m}.
\]
For powers of \( \gamma \), we have
\[
\mathrm{tr}(\gamma^j) = \gamma^j + \gamma^{-j},
\]
since \( \gamma^{2^m} = \gamma^{-1} \) when \( \gamma \) has order \( 2^m + 1 \).

The trace vanishes only when \( \gamma^j = \gamma^{-j} \), which implies \( \gamma^{2j} = 1 \). Given that \( \gamma \) has odd order \( 2^n + 1 \), the only solution is \( j = 0 \). Thus, zero trace occurs uniquely.

Now consider \( j \ne 0 \). Then the elements \( \gamma^j \) and \( \gamma^{-j} \) are distinct and satisfy
\[
x^2 + \mathrm{tr}(\gamma^j)x + 1 = 0.
\]
The trace value \( \mathrm{tr}(\gamma^j) \in \mathbb{F}_{2^m} \) is the negative of the second-leading coefficient of the minimal polynomial of \( \gamma^j \) over \( \mathbb{F}_{2^m} \), and hence corresponds to the trace values determining columns in the torus.

Among the \(2^{m} + 1 \) powers of \( \gamma \), the nonzero indices \( j \ne 0 \) fall into \( 2^{m-1} \) distinct pairs \( \{j, -j\} \), each of which gives the same trace. Hence, there are exactly \( 2^{m-1} \) distinct nonzero trace values of the form \( \gamma^j + \gamma^{-j} \), along with the one zero trace value from \( j = 0 \).

To determine which values in \( \mathbb{F}_{2^m} \) occur as such traces, consider for each \( a \in \mathbb{F}_{2^m} \) the polynomial
\[
P_a(x) := x^2 + a x + 1.
\]
This polynomial splits over \( \mathbb{F}_{2^m} \) if and only if \( a = y + y^{-1} \) for some \( y \in \mathbb{F}_{2^m}^\times \). The map \( y \mapsto y + y^{-1} \) is 2-to-1 (except when \( y = y^{-1} \), i.e., \( y = 1 \)), and hence the number of distinct values \( a \in \mathbb{F}_{2^m} \) for which \( P_a(x) \) is irreducible is exactly \( 2^{m-1} \).

Therefore, the set of \( \beta^i \in \mathbb{F}_{2^m}^\times \) for which the column shift \( \mathsf{DB}_\beta^{[i]} \) appears in the torus corresponds precisely to those \( \beta^i \) for which
\[
x^2 + \beta^i x + 1 \text{ is irreducible over } \mathbb{F}_{2^m}.
\]
These shifts arise from the trace values \( \gamma^j + \gamma^{-j} \), and the corresponding De Bruijn columns appear twice, once for each of \( j \) and \( -j \).

\begin{theorem}\label{thm:occurrence-criterion}
Let \( \beta \in \mathbb{F}_{2^m}^\times \) and \( \mathsf{DB}_\beta \) be the standard trace De Bruijn sequence. Then the shifted sequence \( \mathsf{DB}_\beta^{[i]} \) appears as a column in the trace De Bruijn torus if and only if the polynomial
\[
x^2 + \beta^i x + 1
\]
is irreducible over \( \mathbb{F}_{2^m} \).

Moreover, such a column occurs with multiplicity two, corresponding to the two indices \( j \) and \( -j \in \mathbb{Z}/t\mathbb{Z} \) satisfying
\[
\mathrm{tr}_{\mathbb{F}_{2^n}/\mathbb{F}_{2^m}}(\gamma^j) = \beta^i.
\]
\end{theorem}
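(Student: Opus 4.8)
The plan is to derive this statement as the $p = 2$, $n = 2m$ specialization of the general Occurrence Criterion for $\mathsf{DB}_\beta^{[r]}$ proved above. By that theorem, $\mathsf{DB}_\beta^{[i]}$ appears as a column exactly when $-\beta^i$ is the second-leading coefficient of some irreducible factor of $x^t - 1$ over $\mathbb{F}_{2^m}$, where $t = (2^n - 1)/(2^m - 1) = 2^m + 1$, and the number of such columns equals the sum of degrees of the matching factors. Since we work in characteristic $2$, $-\beta^i = \beta^i$, so the whole argument reduces to understanding the factorization of $x^{2^m + 1} - 1$ over $\mathbb{F}_{2^m}$ and reading off the coefficient of $x^{d-1}$ in each irreducible factor of degree $d$.

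First I would pin down that factorization. The roots of $x^{2^m + 1} - 1$ are exactly the cyclic group $\langle \gamma \rangle$ of order $t = 2^m + 1$, since $\gamma = \alpha^s$ has precisely this order. Because $\gcd(2^m + 1, 2^m - 1) = 1$, we have $\langle \gamma \rangle \cap \mathbb{F}_{2^m}^\times = \{1\}$, so $\gamma^j \notin \mathbb{F}_{2^m}$ whenever $j \not\equiv 0$. The Frobenius automorphism $x \mapsto x^{2^m}$ of $\mathbb{F}_{2^n}/\mathbb{F}_{2^m}$ sends $\gamma$ to $\gamma^{2^m} = \gamma^{-1}$, so each nonzero orbit is the pair $\{\gamma^j, \gamma^{-j}\}$, which genuinely has size two because $2j \equiv 0$ has no nonzero solution modulo the odd number $2^m + 1$. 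This produces the factorization into the linear factor $(x - 1)$ together with the quadratics $(x - \gamma^j)(x - \gamma^{-j}) = x^2 + (\gamma^j + \gamma^{-j})x + 1$, whose $x$-coefficient is exactly $\mathrm{tr}_{\mathbb{F}_{2^n}/\mathbb{F}_{2^m}}(\gamma^j)$. The linear factor carries torus-trace $0$ and accounts only for the zero column, so it contributes no shift $\mathsf{DB}_\beta^{[i]}$; I would therefore restrict the remaining analysis to the size-two orbits, for which the coefficient-versus-trace identification is exact.

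Next I would convert the coefficient condition into the stated irreducibility condition. Combining the above, $\beta^i$ arises as a second-leading coefficient of an irreducible factor precisely when $\beta^i = \gamma^j + \gamma^{-j}$ for some $j \neq 0$. The key equivalence is that this holds if and only if $x^2 + \beta^i x + 1$ is irreducible over $\mathbb{F}_{2^m}$. If it is irreducible, its two roots are Frobenius conjugates $y, y^{2^m} = y^{-1}$ whose product is the constant term $y^{2^m + 1} = 1$, forcing $y$ into the unique order-$(2^m + 1)$ subgroup $\langle \gamma \rangle$, so $y = \gamma^j$ and $\beta^i = \gamma^j + \gamma^{-j}$; conversely, if $\beta^i = \gamma^j + \gamma^{-j}$ with $j \neq 0$, then $x^2 + \beta^i x + 1 = (x - \gamma^j)(x - \gamma^{-j})$ has no root in $\mathbb{F}_{2^m}$ (as $\gamma^{\pm j} \notin \mathbb{F}_{2^m}$) and is therefore irreducible. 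This norm argument, identifying the norm-one elements with the subgroup $\langle \gamma \rangle$, is the crux of the proof.

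Finally I would read off the multiplicity. For a fixed $\beta^i$, the monic polynomial with $x$-coefficient $\beta^i$ and constant term $1$ is uniquely $x^2 + \beta^i x + 1$, so $x^t - 1$ has at most one irreducible factor with second-leading coefficient $\beta^i$, and when present it has degree $2$. The general multiplicity formula then yields exactly two columns equal to $\mathsf{DB}_\beta^{[i]}$, and these correspond to the two members $\gamma^j, \gamma^{-j}$ of the single Frobenius orbit, i.e. to the indices $j$ and $-j$ in $\mathbb{Z}/t\mathbb{Z}$. The only point needing care is the orbit-size bookkeeping guaranteeing $j \not\equiv -j$, which follows from $t = 2^m + 1$ being odd together with $\beta^i \neq 0$ (so that the quadratic is separable with distinct roots).
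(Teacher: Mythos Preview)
Your proof is correct and runs largely parallel to the paper's argument in Section~3.5: both identify $\mathrm{tr}_{\mathbb{F}_{2^n}/\mathbb{F}_{2^m}}(\gamma^j)=\gamma^j+\gamma^{-j}$, note that $j=0$ is the unique zero-trace index, and observe that for $j\neq 0$ the roots $\gamma^{\pm j}\notin\mathbb{F}_{2^m}$ force $x^2+\beta^i x+1$ to be irreducible. The one genuine difference is in the converse direction. The paper uses a cardinality match: it shows the map $y\mapsto y+y^{-1}$ on $\mathbb{F}_{2^m}^\times$ hits exactly $2^{m-1}$ values (the reducible locus), so the irreducible locus also has $2^{m-1}$ elements, equal to the number of nonzero trace values already produced, forcing the two sets to coincide. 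You instead argue directly via the norm: if $x^2+\beta^i x+1$ is irreducible, its roots $y,y^{2^m}$ satisfy $y^{2^m+1}=1$, hence lie in the unique subgroup $\langle\gamma\rangle$ of order $2^m+1$, so $\beta^i=\gamma^j+\gamma^{-j}$ for some $j\neq 0$. Your route is slightly cleaner and yields the multiplicity-two statement immediately from the uniqueness of the matching quadratic factor; the paper's counting argument is a bit more roundabout but has the side benefit of exhibiting the exact number $2^{m-1}$ of distinct nonzero columns. You were also right to flag that the linear factor $x-1$ must be excluded by hand (its second-leading coefficient is $1$ but $\mathrm{tr}(\gamma^0)=0$), a boundary case the general Occurrence Criterion as stated does not treat separately.
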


\section{Recursive Structure and Efficient Updates}
\subsection{ Local Translation Rules }
The recursive structure of De Bruijn tori allows value patterns to be computed efficiently through local multiplicative translations. Given a sampling pattern \( S \subset \mathbb{Z}_s \times \mathbb{Z}_t \) corresponding to field elements \( A|_S = \{a_1, \dots, a_n\} \subset \mathbb{F}_{p^n}^\times \), and a nonzero linear map \( \psi: \mathbb{F}_{p^n} \to \mathbb{F}_p \), the value pattern observed at any torus position \( x \in \mathbb{F}_{p^n}^\times \) is
\[
(\psi(x a_1), \dots, \psi(x a_n)) \in \mathbb{F}_p^n.
\]
A shift to position \( x y \) for some \( y \in \mathbb{F}_{p^n}^\times \) corresponds to translating the pattern by \( y \), which affects each element \( a_j \) as \( y a_j \). Expressing each \( y a_j \) as a linear combination of the original basis elements \( a_i \), say
\[
y a_j = \sum_{i=1}^n c_{i,j} a_i,
\]
we can compute the new values by linearity:
\[
\psi(x y a_j) = \sum_{i=1}^n c_{i,j} \psi(x a_i).
\]
This results in a fixed linear transformation (determined by \( y \)) applied to the previous value pattern. The matrix \( C_y = (c_{i,j}) \) depends only on the relative shift \( y \), not the absolute position \( x \), making this rule globally consistent.

\subsection{Comparison to One-Dimensional Recurrences  }

This recursive update rule generalizes the recurrence relation in classical De Bruijn sequences. In the 1D case, the sampling pattern is \( S = \{1, \alpha, \alpha^2, \dots, \alpha^{n-1}\} \), and a shift by \( \alpha \) leads to the pattern \( \{\alpha, \alpha^2, \dots, \alpha^n\} \). The last term \( \alpha^n \) is expressed as a linear combination of the previous ones using the minimal polynomial of \( \alpha \), yielding a standard recurrence.

While in the 2D torus case the coefficients \( c_{i,j} \) do not directly arise from a minimal polynomial, the recurrence structure persists. The field multiplication induces a consistent linear update rule on value patterns, making the torus a true two-dimensional analogue of De Bruijn sequences.

\subsection{Practical Considerations: Efficient Local Updates}\label{eff-update}

In practice, the most common shifts are one-step translations in the four cardinal directions (left, right, up, down), corresponding to multiplication by \( \beta^{\pm 1} \) or \( \gamma^{\pm 1} \). For these translations, a well-structured sampling pattern enables fast local updates.

A particularly efficient shape is a rectangular \( m \times \frac{n}{m} \) pattern corresponding to a Kronecker basis \( \{ \beta^i \gamma^j \} \). This shape has several advantages:
\begin{itemize}
\item A left/right shift corresponds to multiplying each basis element by \( \beta \) or \( \beta^{-1} \), which updates only the new column — requiring the computation of just \( m \) new values.
\item An up/down shift corresponds to multiplying by \( \gamma \) or \( \gamma^{-1} \), which affects only the new row — requiring only \( \frac{n}{m} \) new values.
\item The rest of the pattern remains unchanged due to overlap with the previous state.
\end{itemize}

Thus, regular sampling patterns such as rectangles support partial updates, minimizing computation and memory usage. This makes them highly desirable in practical applications, such as image tiling, streaming data analysis, and memory-efficient coding schemes.

\section{Example}\label{examples}

Consider \( \mathbb{F}_{16} = \mathbb{F}_2(\alpha) \), where \( \alpha \) is a root of the irreducible polynomial \( x^4 + x + 1 \). Let \( \beta = \alpha^5 \) and \( \gamma = \alpha^3 \), so that \( \mathbb{F}_4 = \mathbb{F}_2(\beta) \), and note that \( \mathbb{F}_{16}^\times \) is generated by \( \alpha \). Since \( \gcd(5,3) = 1 \), the set \( \{ \alpha^{5i + 3j} \} \), as \( i,j \) range over \( \mathbb{Z}/3\mathbb{Z} \) and \( \mathbb{Z}/5\mathbb{Z} \), covers all nonzero elements of \( \mathbb{F}_{16} \) exactly once. We arrange these elements in a \( 3 \times 5 \) toroidal array indexed by row \( i \) and column \( j \).

The resulting array of field elements is:
\[
\begin{array}{|c|c|c|c|c|}
\hline
 \alpha^0 & \alpha^3 & \alpha^6 & \alpha^9 & \alpha^{12} \\
\hline
 \alpha^5 & \alpha^8 & \alpha^{11} & \alpha^{14} & \alpha^2 \\
\hline
 \alpha^{10} & \alpha^{13} & \alpha^1 & \alpha^4 & \alpha^7 \\
\hline
\end{array}
\]

Applying the trace function
\[
\mathrm{tr}_{\mathbb{F}_{16}/\mathbb{F}_2}(x) = x + x^2 + x^4 + x^8,
\]
entrywise to this array yields the corresponding binary torus:
\[
\begin{array}{|c|c|c|c|c|}
\hline
0 & 1 & 1 & 1 & 1 \\
\hline
0 & 0 & 1 & 1 & 0 \\
\hline
0 & 1 & 0 & 0 & 1 \\
\hline
\end{array}
\]

This binary array forms a trace-based De Bruijn torus over \( \mathbb{F}_2 \), constructed via the coordinate map \( x \mapsto \psi(x), \psi(\alpha x), \dots \), where \( \psi = \mathrm{tr}_{\mathbb{F}_{16}/\mathbb{F}_2} \). By the occurrence criterion in Theorem~\ref{thm:occurrence-criterion}, each column corresponds to a cyclic shift of the standard sequence \( DB_\beta \), and the total number of distinct columns matches the number of irreducible quadratics \( x^2 + \beta^i x + 1 \) over \( \mathbb{F}_4 \).

Indeed, the nonzero columns are cyclic shifts of the sequence
\[
DB_{\beta} = 
\begin{array}{|c|}
\hline
0 \\
\hline
1 \\
\hline
1 \\
\hline
\end{array},
\]
namely \( DB_\beta^{[1]} \) and \( DB_\beta^{[2]} \), reflecting the fact that the polynomials \( x^2 + \beta^i x + 1 \) are irreducible over \( \mathbb{F}_4 \) for \( i = 1, 2 \).

We now consider various sampling patterns—i.e., subsets of positions in the torus—used to evaluate the trace function. A pattern is valid if the corresponding evaluations form a linearly independent set over \( \mathbb{F}_2 \). Below are some typical examples:

$$
\begin{tikzpicture}[scale=0.7]
  \foreach \i in {0,1} {
    \foreach \j in {1,2} {
      \fill[blue!30] (\i,\j) rectangle ++(1,1);
    }
  }

  \foreach \i in {0,1,2,3} {
    \draw (0,\i) -- (5,\i);
  }
  \foreach \j in {0,1,2,3,4,5} {
    \draw (\j,0) -- (\j,3);
  }
\end{tikzpicture}
\hspace{.3cm}
\begin{tikzpicture}[scale=0.7]
  \foreach \i in {0,1,2} {
      \fill[blue!30] (\i,2) rectangle ++(1,1);
    }
  \fill[blue!30] (0,1) rectangle ++(1,1);

  \foreach \i in {0,1,2,3} {
    \draw (0,\i) -- (5,\i);
  }
  \foreach \j in {0,1,2,3,4,5} {
    \draw (\j,0) -- (\j,3);
  }
\end{tikzpicture}
\hspace{.3cm}
\begin{tikzpicture}[scale=0.7]
  \fill[blue!30] (0,1) rectangle ++(1,1);
    \fill[blue!30] (0,2) rectangle ++(1,1);
    \fill[blue!30] (1,2) rectangle ++(1,1);
      \fill[blue!30] (1,0) rectangle ++(1,1);
  \foreach \i in {0,1,2,3} {
    \draw (0,\i) -- (5,\i);
  }
  \foreach \j in {0,1,2,3,4,5} {
    \draw (\j,0) -- (\j,3);
  }
\end{tikzpicture}
$$

Moreover, once a single independent pattern is chosen—such as
$$\begin{tikzpicture}[scale=0.7]
  \fill[blue!30] (1,1) rectangle ++(1,1);
  \fill[blue!30] (0,2) rectangle ++(1,1);
  \foreach \i in {0,1,2,3} {
    \draw (0,\i) -- (5,\i);
  }
  \foreach \j in {0,1,2,3,4,5} {
    \draw (\j,0) -- (\j,3);
  }
\end{tikzpicture}$$

additional sampling patterns can be constructed by placing suitable disjoint translations of the same shape throughout the torus:

$$
\begin{tikzpicture}[scale=0.7]
  \fill[blue!30] (1,1) rectangle ++(1,1);
  \fill[blue!30] (0,2) rectangle ++(1,1);
    \fill[blue!70] (2,1) rectangle ++(1,1);
  \fill[blue!70] (1,2) rectangle ++(1,1);
  \foreach \i in {0,1,2,3} {
    \draw (0,\i) -- (5,\i);
  }
  \foreach \j in {0,1,2,3,4,5} {
    \draw (\j,0) -- (\j,3);
  }
\end{tikzpicture}
\hspace{.3cm}
\begin{tikzpicture}[scale=0.7]
  \fill[blue!30] (1,1) rectangle ++(1,1);
  \fill[blue!30] (0,2) rectangle ++(1,1);
    \fill[blue!70] (1,0) rectangle ++(1,1);
  \fill[blue!70] (0,1) rectangle ++(1,1);
  \foreach \i in {0,1,2,3} {
    \draw (0,\i) -- (5,\i);
  }
  \foreach \j in {0,1,2,3,4,5} {
    \draw (\j,0) -- (\j,3);
  }
\end{tikzpicture}
\hspace{.3cm}
\begin{tikzpicture}[scale=0.7]
  \fill[blue!30] (1,1) rectangle ++(1,1);
  \fill[blue!30] (0,2) rectangle ++(1,1);
    \fill[blue!70] (3,0) rectangle ++(1,1);
  \fill[blue!70] (2,1) rectangle ++(1,1);
  \foreach \i in {0,1,2,3} {
    \draw (0,\i) -- (5,\i);
  }
  \foreach \j in {0,1,2,3,4,5} {
    \draw (\j,0) -- (\j,3);
  }
\end{tikzpicture}
$$

This approach supports the recursive extension of patterns while preserving independence.

Finally, we illustrate how to compute a recursive update rule based on a \( 2 \times 2 \) sampling pattern. Suppose the current block values are:
\[
\begin{aligned}
B_{i,j} &= \psi(x), \quad &B_{i,j+1} &= \psi(x\alpha^3), \\
B_{i+1,j} &= \psi(x\alpha^5), \quad &B_{i+1,j+1} &= \psi(x\alpha^8).
\end{aligned}
\]
To compute the next row via a right shift, we evaluate:
\[
B_{i,j+2} = \psi(x\alpha^6), \quad B_{i+1,j+2} = \psi(x\alpha^{11}).
\]

$$
\begin{tikzpicture}[scale=1.2]
  \foreach \i in {1,2} {
    \foreach \j in {1,2} {
      \fill[blue!30] (\i,\j) rectangle ++(1,1);
    }
  }
  \fill[red!40] (3,1) rectangle ++(1,1);
  \fill[red!40] (3,2) rectangle ++(1,1);

  \foreach \i in {0,1,2,3} {
    \draw (0,\i) -- (5,\i);
  }
  \foreach \j in {0,1,2,3,4,5} {
    \draw (\j,0) -- (\j,3);
  }

  \node at (1.5, 2.5) {\small $B_{i,j}$};
  \node at (2.5, 2.5) {\small $B_{i,j+1}$};
  \node at (1.5, 1.5) {\small $B_{i+1,j}$};
  \node at (2.5, 1.5) {\small $B_{i+1,j+1}$};
  \node at (3.5, 2.5) {\small $B_{i,j+2}$};
  \node at (3.5, 1.5) {\small $B_{i+1,j+2}$};
\end{tikzpicture}
$$

Using the relations:
\[
\alpha^6 = 1 + \alpha^3 + \alpha^8, \quad \alpha^{11} = \alpha^3 + \alpha^5,
\]
we obtain the recursive rule:
\[
\begin{pmatrix}
B_{i,j+2} \\
B_{i+1,j+2}
\end{pmatrix}
=
\begin{pmatrix}
1 & 1 & 0 & 1 \\
0 & 1 & 1 & 0
\end{pmatrix}
\begin{pmatrix}
B_{i,j} \\
B_{i,j+1} \\
B_{i+1,j} \\
B_{i+1,j+1}
\end{pmatrix}.
\]

The matrix  
\[
m_{\mathrm{right}} = \begin{pmatrix}
1 & 1 & 0 & 1 \\
0 & 1 & 1 & 0
\end{pmatrix}
\]  
defines a linear rule for shifting a \( 2 \times 2 \) block one column to the right, reflecting the recursive structure induced by the trace map over \( \mathbb{F}_{16} \).

\section{Applications}

\subsection{Practical Applications}
As mentioned in the introduction, the concept of the \emph{De Bruijn torus} originates from real-world applications. However, the toroidal structure itself often presents challenges that hinder its direct implementation. In practice, these challenges can be resolved through simple adjustments based on the shape of the accompanying sampling pattern, thereby enabling feasible applications.

Let $B = (B_{i,j})$ be an $s \times t$ De Bruijn torus. For a sampling pattern $S$, define:
\begin{itemize}
    \item $I = \max\{i \mid (i, j) \in S\} - \min\{i \mid (i, j) \in S\}$
    \item $J = \max\{j \mid (i, j) \in S\} - \min\{j \mid (i, j) \in S\}$
\end{itemize}

By applying a toroidal translation, we may assume that:
\[ \min\{i \mid (i, j) \in S\} = \min\{j \mid (i, j) \in S\} = 0. \]
In this case, $I = \max\{i \mid (i, j) \in S\}$ and $J = \max\{j \mid (i, j) \in S\}$. A suitable adjustment is to extend the $s \times t$ De Bruijn torus $B$ to a $(s + I - 1) \times (t + J - 1)$ array $B' = (B'_{i,j})$, defined by:
\[ B'_{i,j} = B_{i \bmod s,\ j \bmod t}. \]

For example, the extended array of the binary torus with the sampling pattern corresponding to the Kronecker basis in Section~\ref{examples} is:
\[
\begin{array}{|c|c|c|c|c|c|}
\hline
0 & 1 & 1 & 1 & 1 & 0 \\
\hline
0 & 0 & 1 & 1 & 0 & 0 \\
\hline
0 & 1 & 0 & 0 & 1 & 0 \\
\hline
0 & 1 & 1 & 1 & 1 & 0 \\
\hline
\end{array}
\]

And the one corresponding to the second L-shape sampling pattern is:
\[
\begin{array}{|c|c|c|c|c|c|c|}
\hline
0 & 1 & 1 & 1 & 1 & 0 & 1 \\
\hline
0 & 0 & 1 & 1 & 0 & 0 & 0 \\
\hline
0 & 1 & 0 & 0 & 1 & 0 & 1 \\
\hline
0 & 1 & 1 & 1 & 1 & 0 & 1 \\
\hline
\end{array}
\]

This also demonstrates that regular sampling patterns minimize memory usage, as discussed in Section~\ref{eff-update}.

\subsection{Generalizations to Higher Dimensions}
In Section~\ref{construction}, we showed that for a prime number $p$, a finite field $\mathbb{F}_{p^n}$ satisfying $p^n - 1 = s \cdot t$ with $\gcd(s, t) = 1$ leads to an $s \times t$ De Bruijn torus over $\mathbb{F}_p$.

This finite-field-based construction can be generalized to a De Bruijn $N$-torus. Let $p$ be a prime, and suppose $\mathbb{F}_{p^n}$ satisfies $p^n - 1 = \prod^{N}_{i=1} P_{i}$, where $P_{i}$ are pairwise relatively prime.

Define $\hat{P_k} = \prod_{1 \le i \le N, i \ne k} P_i$. Let $\alpha$ be a generator of $\mathbb{F}_{p^n}^\times$, and define $\beta_j = \alpha^{\hat{P_j}}$, so that $\langle \beta_j \rangle$ is a subgroup of order $P_j$.

We construct a $P_1 \times P_2 \times \cdots \times P_N$ toroidal tensor $A = (A_{i_1, \ldots, i_N})$ with entries in $\mathbb{F}_{p^n}^\times$, defined by:
\[
A_{i_1, \ldots, i_N} = \prod_{j=1}^{N} \beta_j^{i_j}.
\]

Since the $P_i$ are pairwise relatively prime, the map $ (i_1, \ldots, i_N) \mapsto \prod_{j=1}^{N} \beta_j^{i_j} $ is a bijection between positions and elements of $\mathbb{F}_{p^n}^\times$.

After applying a nonzero $\mathbb{F}_p$-linear map $\psi$ entrywise to the tensor $A$, we obtain a new tensor $B = (\psi(A_{i_1, \ldots, i_N}))$ with entries in $\mathbb{F}_p$. This tensor $B$ is referred to as a De Bruijn $N$-torus.

This generalization preserves the bijective encoding and structural advantages of the 2D case, while enabling applications in higher-dimensional settings such as multidimensional coding, spatial sampling, and combinatorial design.

\clearpage

\bibliographystyle{unsrt} 
\bibliography{reference}

\end{document}